\documentclass[12pt,letterpaper]{amsart}
\usepackage[letterpaper,hmargin=1.25in,vmargin=1.5in]{geometry}
\usepackage{latexsym, amssymb, amsmath}

\newtheorem{theorem}{Theorem}[section]
\newtheorem{lemma}[theorem]{Lemma}
\newtheorem{corollary}[theorem]{Corollary}

\theoremstyle{definition}

\makeatletter
    
    \@addtoreset{equation}{section}
  \makeatother


\newcommand{\Ric}{{\rm Ric}}


\begin{document}

\title[Einstein-type K\"ahler manifolds with $\alpha=0$]
{Classification of gradient Einstein-type K\"ahler manifolds with $\alpha=0$}

\author{Shun Maeta}
\address{Department of Mathematics, Chiba University, 1-33, Yayoicho, Inage, Chiba, 263-8522, Japan.}
\curraddr{}
\email{shun.maeta@faculty.gs.chiba-u.jp~{\em or}~shun.maeta@gmail.com}
\thanks{The author is partially supported by the Grant-in-Aid for Scientific Research (C), No.23K03107, Japan Society for the Promotion of Science.}
\subjclass[2010]{53C25, 53C55, 53C21, 32Q15, 53C20}

\date{}

\dedicatory{}

\keywords{Einstein-type manifolds, K\"ahler manifolds, gradient Yamabe solitons, gradient quasi-Yamabe solitons}

\commby{}

\begin{abstract}
Thanks to the ambitious project initiated by Catino, Mastrolia, Monticelli and Rigoli, which aims to provide a unified viewpoint for various geometric solitons, many classes, including Ricci solitons, Yamabe solitons, $k$-Yamabe solitons, quasi-Yamabe solitons, and conformal solitons, can now be studied under a unified framework known as Einstein-type manifolds. 
Einstein-type manifolds are characterized by four constants, denoted by $\alpha, \beta, \mu$ and $\rho$.
In this paper, we completely classify all non-trivial, complete gradient Einstein-type K\"ahler manifolds with $\alpha = 0$. As a corollary, rotational symmetry for many classes is obtained. In particular, we show that any non-trivial complete gradient quasi-Yamabe soliton on K\"ahler manifolds is rotationally symmetric.
\end{abstract}

\maketitle

\bibliographystyle{amsalpha}


\section{Introduction}\label{intro}

Self-similar solutions to geometric flows are a fundamental and important topic in the study of Riemannian manifolds. 
They have become a central area in geometry (cf. \cite{BR13}, \cite{CSZ12}, \cite{CC12}, \cite{CC13}, \cite{CCCMM14}, \cite{CD08}, \cite{CMM12}, \cite{CMMR17}, \cite{CMM16}, \cite{CMM17}, \cite{Chowetal07}, \cite{CLN06}, \cite{DS13}, \cite{Hamilton89}, \cite{HL14}, \cite{Maeta21}, \cite{Maeta23}, \cite{Maeta24}, \cite{Perelman02}). 
There are various types of geometric solitons, including Ricci solitons, Yamabe solitons, $k$-Yamabe solitons, quasi-Yamabe solitons, almost Yamabe solitons, conformal solitons. 
To provide a unified viewpoint for all these solitons, Catino, Mastrolia, Monticelli and Rigoli \cite{CMMR17} introduced Einstein-type manifolds as part of an ambitious project.

Let $(M,g)$ be a Riemannian manifold. If there exist $X\in\mathfrak{X}(M)$ and $\lambda\in C^\infty(M)$ such that 
\begin{equation}\label{ETM}
\alpha \Ric +\frac{\beta}{2}L_Xg+\mu X^\flat\otimes X^\flat =(\rho R+\lambda)g,
\end{equation}
for some constants $\alpha,\beta,\mu,\rho\in\mathbb{R}$, with $(\alpha,\beta,\mu)\not=(0,0,0)$, then $(M,g)$ is called an {\em Einstein-type manifold}.
Here, $\Ric$ denotes the Ricci tensor of $M$, $L$ denotes the Lie derivative, and $R$ is the scalar curvature of $M$.
If $X=\nabla F$ for some $F\in C^\infty(M)$, it is called a {\em gradient} Einstein-type manifold:
\begin{equation}\label{getm}
\alpha \Ric +\beta \nabla \nabla F+\mu \nabla F \nabla F =(\rho R+\lambda)g,
\end{equation}
where $\nabla\nabla F$ is the Hessian of $F$, and $F$ is called the potential function.
If $F$ is constant, then it is called {\em trivial}.

As pointed out in \cite{CMMR17}, gradient Einstein-type manifolds encompass many known soliton notions (cf. \cite{Hamilton82}, \cite{BR13}, \cite{CMM12}, \cite{HL14}):

$\bullet$ gradient Yamabe solitons: $(\alpha,\beta,\mu,\rho)=(0,1,0,1)$ and $\lambda$ is a constant.

$\bullet$ gradient almost Yamabe solitons: $(\alpha,\beta,\mu,\rho)=(0,1,0,1)$.

$\bullet$ gradient $k$-Yamabe solitons: $(\alpha,\beta,\mu,\rho)=\Big(0,\frac{1}{2(n-1)},0,0\Big)$ and $\lambda=\sigma_k-\nu$, where $\sigma_k$ is the $\sigma_k$-curvature and $\nu$ is a constant.

$\bullet$ gradient conformal solitons: $(\alpha,\beta,\mu,\rho)=(0,1,0,0)$.

$\bullet$ gradient quasi-Yamabe solitons: $(\alpha,\beta,\mu,\rho)=(0,1,-\frac{1}{k},1)$, where $k\in\mathbb{R}\setminus\{0\}$ and $\lambda$ is constant.

In this paper, we consider all the above cases, that is, gradient Einstein-type manifolds with $\alpha=0$. 
In \cite{CMMR17}, Catino, Mastrolia, Monticelli and Rigoli provided the structure theorem for Einstein-type manifolds with $\alpha=0$ (Theorem 1.4 in \cite{CMMR17}). 
Theorem 1.4 in \cite{CMMR17} is a generalization of theorems included in \cite{CSZ12}, \cite{CMM12}, \cite{Tashiro65}. 

As is well known, K\"ahler manifolds are central objects in both geometry and physics.
In particular, Calabi-Yau manifolds play a central role in the approach to elementary particle theory known as superstring theory (cf. \cite{LeeCM}). To understand the structure of K\"ahler manifolds, geometric flows and their self-similar solutions are considered on K\"ahler manifolds (cf. \cite{BEG13}, \cite{FIK03}, \cite{MW15}, \cite{PSB24}). 
Therefore, in this paper, we consider the broader notion of Einstein-type K\"ahler manifolds with $\alpha=0$, and completely classify non-trivial, complete gradient Einstein-type K\"ahler manifolds with $\alpha = 0$.

\begin{theorem}\label{main}
Any $2m$-dimensional $(m>1)$ non-trivial, complete, gradient Einstein-type K\"ahler manifold $(M,g,F)$ with $\alpha = 0$ is one of the following. 

\noindent
(I) When $\beta$ is zero, there exists no non-trivial Einstein-type K\"ahler manifold.

\noindent
(II) When $\mu$ is zero, the Einstein-type K\"ahler manifold is  either,\\
(A) a direct product $M=\mathbb{R}\times N^{2m-1}, g=dr^2+a^2g_N$, where $g_N$ is a Riemannian metric of $N$, and the potential function $F$ is $F(r)=ar+b$ for some constants $a>0$ and $b$, or\\
(B) the complex Euclidean space
and the potential function $F$ is $F(r)=ar^2+b$ for some constants $a>0$ and $b$.

\noindent
(III) When $\beta$ and $\mu$ have opposite signs, there exists no non-trivial Einstein-type K\"ahler manifold.

\noindent
(IV) When $\beta$ and $\mu$ have the same sign, the Einstein-type K\"ahler manifold is 
$M=[0,+\infty)\times \mathbb{S}^{2m-1}$ and $g=dr^2+\psi^2(r)g_S,$ $r\in[0,+\infty)$,
where $\psi(r)=F'(r)e^{-cF(r)}$. The potential function $F$ is
\[
F(r)=-\frac{1}{c}\log\left\{
c
\Big(
-\frac{a}{2}r^2-c_1
\Big)
\right\},
\]
for some positive constants $a$ and $c_1$, where $c=-\frac{\mu}{\beta}.$

\end{theorem}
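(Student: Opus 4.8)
The plan is to strip the equation down to the classical concircular equation $\nabla\nabla u=\phi\,g$ and then inject the two pieces of information that only the Kähler structure provides. Since $\alpha=0$ and $(\alpha,\beta,\mu)\neq(0,0,0)$, equation \eqref{getm} reads $\beta\,\nabla\nabla F+\mu\,\nabla F\otimes\nabla F=(\rho R+\lambda)g$. If $\beta=0$ then $\mu\neq0$, and the left-hand side $\mu\,\nabla F\otimes\nabla F$ has rank at most one while $(\rho R+\lambda)g$ is a multiple of $g$; at any point where $\nabla F\neq0$ the ranks cannot agree, so $\nabla F\equiv0$ and $F$ is trivial. This disposes of part (I). Assuming henceforth $\beta\neq0$, set $c:=-\mu/\beta$ and $\sigma:=(\rho R+\lambda)/\beta$, so that $\nabla\nabla F=c\,\nabla F\otimes\nabla F+\sigma\,g$.

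To remove the quadratic term I would introduce $u:=\int e^{-cF}\,dF$. Since $u'(F)=e^{-cF}$ satisfies $u''(F)+c\,u'(F)=0$, a direct computation gives
\[
\nabla\nabla u=\big(u''+c\,u'\big)\,\nabla F\otimes\nabla F+\sigma\,u'\,g=\sigma e^{-cF}\,g=:\phi\,g,
\]
so $u$ is a nonconstant concircular potential. The structure theorem for $\alpha=0$ (Theorem~1.4 of \cite{CMMR17}, in the line of Tashiro) then applies to $u$: off the critical set of $u$, $(M,g)$ is a warped product $dr^2+\psi(r)^2g_N$ with $u=u(r)$, $\psi$ proportional to $u'$, and $\phi=u''$, and the global model is governed by the number of critical points of $u$ — none (a product over $\R$), one (rotationally symmetric, $M\cong\R^{2m}$), or two (a closed manifold homeomorphic to $\mathbb{S}^{2m}$).

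Here the Kähler hypothesis enters decisively. First, from $\nabla J=0$ the field $V:=J\nabla u$ obeys $\nabla_X V=\phi\,JX$, and since $J$ is skew-adjoint ($g(JX,Y)=-g(X,JY)$) this yields $L_Vg=0$; thus $J\nabla u$ is a Killing field, tangent to the level sets of $u$ and vanishing exactly at the critical points. The smoothness of $g$ at an isolated pole forces the cross-section to be a round $\mathbb{S}^{2m-1}$ (with $\psi(0)=0$ and $\psi'(0)$ normalized), and $J\nabla u$ underlies the rotational symmetry there. Second — and this is exactly where $m>1$ is used — the two-pole model is a closed Kähler manifold homeomorphic to $\mathbb{S}^{2m}$; but a compact Kähler manifold of complex dimension $m$ has $b_2\geq1$ (the class $[\omega]$ is nonzero in $H^2$), whereas $b_2(\mathbb{S}^{2m})=0$ for $m>1$. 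The two-critical-point case is therefore excluded outright, which is why no compact examples appear in the statement.

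What remains is to integrate, in each surviving model, the coupled system obtained by feeding the warped-product scalar curvature $R=\frac{R_N}{\psi^2}-2(2m-1)\frac{\psi''}{\psi}-(2m-1)(2m-2)\frac{(\psi')^2}{\psi^2}$ back into $\sigma=(\rho R+\lambda)/\beta$; together with $\phi=u''$ and $\psi\propto u'$ this produces a single ODE for $\psi$. When $\mu=0$ (so $c=0$, $u=F$), completeness together with the curvature ODE forces the warping in the pole-free model to be constant, leaving the product $\R\times N$ with affine $F$ (part (II)(A)), while the one-pole model integrates to flat $\C^m$ with $F=ar^2+b$ (part (II)(B)). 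When $\mu\neq0$ the same ODE, analyzed by the sign of $c=-\mu/\beta$, shows that opposite signs of $\mu$ and $\beta$ ($c>0$) admit no complete nontrivial solution (part (III)), while equal signs ($c<0$) force the rotationally symmetric model whose potential integrates to the logarithmic $F$ of part (IV), whereupon unwinding $u=\int e^{-cF}dF$ gives $\psi=F'e^{-cF}$. I expect the crux to be this final coupled analysis rather than the soft reductions: the concircular reduction, the structure theorem, the Killing identity and the $b_2$-obstruction are all short, whereas the real work is that $R$ sits on the right-hand side of the defining equation and must be reconciled with the warped-product formula, turning the problem into a nonlinear ODE whose complete, positive solutions must be sorted sign-by-sign in $c$; pinning down $a$ and $c_1$, excluding spurious non-constant warpings in the pole-free case, and verifying that each model genuinely carries a compatible Kähler structure are where I anticipate the most care.
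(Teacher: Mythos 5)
Your reduction to the concircular equation via $u=\int e^{-cF}\,dF$ is correct, your rank argument for part (I) is valid, the observation that $J\nabla u$ is Killing is correct, and your $b_2$-obstruction is a nice explicit justification of the paper's one-line dismissal of the compact case. However, there is a genuine gap at exactly the place you defer to ``the final coupled analysis,'' and the premise of that analysis is false in the setting of the theorem. You propose to close the system by feeding the warped-product scalar curvature back into $\sigma=(\rho R+\lambda)/\beta$, treating this as a constraint that produces an ODE for $\psi$. But in the definition of an Einstein-type manifold, $\lambda$ is an arbitrary \emph{smooth function}, not a constant (this is what makes almost Yamabe solitons, $k$-Yamabe solitons, etc.\ fit into the framework); indeed the paper immediately reduces \eqref{getm} to $\beta\nabla\nabla F+\mu\nabla F\nabla F=\varphi g$ with $\varphi\in C^\infty(M)$ completely unconstrained. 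Consequently the relation $\sigma=(\rho R+\lambda)/\beta$ imposes nothing: given any warping $\psi$ and the resulting $R$, one simply defines $\lambda:=\beta\sigma-\rho R$. In the non-compact models your only remaining uses of the K\"ahler hypothesis are the Killing field $J\nabla u$ --- which exists automatically for \emph{every} solution of the concircular equation on a K\"ahler manifold and hence constrains nothing --- and the $b_2$ argument, which only kills case (3). Nothing in your proposal rules out, say, non-affine warpings in the pole-free model, so the classification cannot be completed along the route you describe.

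The missing idea is a second, decisive use of the K\"ahler structure, and it is the heart of the paper's proof. From $\nabla\nabla F=\varphi g+c\nabla F\nabla F$ one computes
\[
Rm(X,Y)\nabla F=(X\varphi-c\varphi XF)Y-(Y\varphi-c\varphi YF)X,
\]
applies $J$ (using $\nabla J=0$), traces, and invokes the $J$-invariance of the Ricci tensor to obtain $\Ric(X,\nabla F)=-g\big(X,\nabla\varphi-c\varphi\nabla F\big)$; on the other hand the traced Ricci identity (Lemma \ref{keylem}, equation \eqref{key1}) gives $\Ric(X,\nabla F)=-(2m-1)\,g\big(X,\nabla\varphi-c\varphi\nabla F\big)$. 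For $m>1$ these two expressions force the key identity \eqref{key4}, $\nabla\varphi=c\varphi\nabla F$, which holds with no hypothesis whatsoever on $\lambda$. This converts the soliton equation into the ODE \eqref{keyeq}, $F'''-3cF'F''+c^2F'^3=0$, equivalently $\psi''=0$ for $\psi=F'e^{-cF}$, and from $\psi(r)=ar+b$ all four cases (I)--(IV) follow by elementary positivity and smoothness arguments (e.g.\ $\psi>0$ on all of $\R$ forces $a=0$ in case (1); $\psi(0)=0$ forces $b=0$ in case (2); and positivity of $F'$ on $(0,+\infty)$ forces $c<0$, i.e.\ excludes opposite signs of $\beta$ and $\mu$). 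Without deriving \eqref{key4} --- or some equivalent pointwise consequence of the K\"ahler condition on $\varphi$ --- your outline cannot reach the stated classification, even in the special cases where $\lambda$ happens to be constant, since there the asserted ODE analysis is never actually carried out.
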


Since $\lambda$ is a smooth function, the equation $\eqref{getm}$ can be reduced to
\begin{equation}\label{getm0}
\beta \nabla \nabla F+\mu \nabla F \nabla F =\varphi g,
\end{equation}
for $\varphi\in C^\infty(M)$ and constants $(\beta,\mu)\not=(0,0).$
Because of the symmetry of the sign of $(\beta,\mu)$, we only need to consider the following four cases.
\[
(\beta,\mu)=(0,P),(P,P),(P,0),(P,N),
\]
where $P$ and $N$ denote positive and negative constants, respectively.

We show the following lemma used in the proof of Theorem \ref{main}.
\begin{lemma}\label{keylem}
Let $(M,g)$ be an $n$-dimensional gradient Einstein-type manifold with $\alpha=0$. 
For the case $\beta\not=0,$ the following formulas hold.
\begin{align}\label{key1}
(n-1)\nabla_i \varphi-(n-1)c\varphi \nabla _iF+R_{ij}\nabla _jF=0,
\end{align}
\begin{align}\label{key2}
&(n-1)\nabla_\ell\nabla_i \varphi 
-(n-1)c\nabla_\ell\varphi\nabla _iF\\
-&(n-1)c\varphi^2 g_{\ell i}
-(n-1)c^2\varphi \nabla _{\ell} F\nabla _iF\notag\\
+&\nabla_\ell R_{ij}\nabla _jF
+\varphi R_{i\ell}
+cR_{ij}\nabla_\ell F\nabla_jF=0,\notag
\end{align}
\begin{align}\label{key3}
&(n-1)\Delta \varphi 
-(n-1)c\langle \nabla\varphi, \nabla F\rangle \\
-&(n-1)nc\varphi^2
-(n-1)c^2\varphi |\nabla F|^2\notag\\
+&\frac{1}{2}\langle \nabla R, \nabla F\rangle 
+\varphi R
+c\Ric(\nabla F,\nabla F)=0,\notag
\end{align}
where $c=-\frac{\mu}{\beta}$, and we redefine $\frac{\varphi}{\beta}$ as $\varphi$.
\end{lemma}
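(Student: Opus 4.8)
The plan is to work entirely from the defining identity in its reduced form. Dividing \eqref{getm0} by $\beta$, setting $c=-\frac{\mu}{\beta}$, and renaming $\frac{\varphi}{\beta}$ as $\varphi$, the equation becomes the single tensorial relation
\begin{equation*}
\nabla_i\nabla_j F-c\,\nabla_i F\,\nabla_j F=\varphi\, g_{ij}.
\end{equation*}
All three formulas follow from this by repeated covariant differentiation, using only the commutation (Ricci) identity for the Hessian of $F$ and the contracted second Bianchi identity $\nabla^i R_{ij}=\tfrac12\nabla_j R$; no Kähler hypothesis is needed, consistent with the lemma being stated for general $n$.

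First I would derive \eqref{key1}. Differentiating the reduced equation gives $\nabla_k\nabla_i\nabla_j F=\nabla_k\varphi\,g_{ij}+c\big(\nabla_k\nabla_i F\,\nabla_j F+\nabla_i F\,\nabla_k\nabla_j F\big)$. Antisymmetrizing in $k$ and $i$, the left side produces the curvature term through $\nabla_k\nabla_i\nabla_j F-\nabla_i\nabla_k\nabla_j F=-R_{kij}{}^{l}\nabla_l F$, while on the right the Hessian-quadratic terms, after substituting the reduced equation to replace $\nabla_i\nabla_j F$ by $\varphi g_{ij}+c\nabla_i F\nabla_j F$, simplify sharply: the genuinely cubic pieces $c^2\nabla_i F\nabla_j F\nabla_k F$ cancel, leaving only $c\varphi\big(\nabla_i F\,g_{kj}-\nabla_k F\,g_{ij}\big)$ alongside $\nabla_k\varphi\,g_{ij}-\nabla_i\varphi\,g_{kj}$. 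Tracing the resulting identity with $g^{ij}$ converts the Riemann tensor into the Ricci tensor and collapses the metric traces to the coefficients $(n-1)$, yielding \eqref{key1}.

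Next, \eqref{key2} is obtained purely algebraically by applying $\nabla_\ell$ to \eqref{key1} and then, in each term where a Hessian $\nabla_\ell\nabla_i F$ or $\nabla_\ell\nabla_j F$ appears, substituting $\nabla_\ell\nabla_i F=\varphi g_{\ell i}+c\nabla_\ell F\nabla_i F$. The term $-(n-1)c\varphi\,\nabla_\ell\nabla_i F$ splits into $-(n-1)c\varphi^2 g_{\ell i}-(n-1)c^2\varphi\,\nabla_\ell F\nabla_i F$, and $R_{ij}\nabla_\ell\nabla_j F$ splits into $\varphi R_{i\ell}+cR_{ij}\nabla_\ell F\nabla_j F$; collecting these reproduces \eqref{key2} line for line.

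Finally, \eqref{key3} follows by contracting \eqref{key2} with $g^{\ell i}$. Most terms trace trivially, producing $\Delta\varphi$, the extra factor $n$ in front of $\varphi^2$, $|\nabla F|^2$, $R$, and $\Ric(\nabla F,\nabla F)$. The one step requiring care is $g^{\ell i}\nabla_\ell R_{ij}\nabla_j F=(\d\,\Ric)_j\nabla_j F$, which I would rewrite via the contracted second Bianchi identity as $\tfrac12\la\nabla R,\nabla F\ra$. The main point to watch throughout is the consistency of the sign conventions for the Riemann and Ricci tensors, so that the curvature term enters \eqref{key1} precisely as $+R_{ij}\nabla_j F$; once that convention is fixed the remainder is routine bookkeeping.
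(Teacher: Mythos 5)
Your proof is correct and is essentially the paper's own argument: the Ricci identity $\Delta\nabla_i F=\nabla_i\Delta F+R_{ij}\nabla_j F$ that the paper applies to the reduced equation \eqref{getm00} is exactly the trace of the third-derivative commutation identity you antisymmetrize, so your derivation of \eqref{key1} is the same computation, merely organized at the untraced tensor level before contracting with $g^{ij}$. Your steps for \eqref{key2} and \eqref{key3} --- differentiate \eqref{key1}, substitute $\nabla_\ell\nabla_i F=\varphi g_{\ell i}+c\nabla_\ell F\nabla_i F$ wherever a Hessian appears, then trace and convert $\nabla^i R_{ij}$ via the contracted second Bianchi identity --- coincide with the paper's verbatim.
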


\begin{proof}
The gradient Einstein-type equation \eqref{getm0} can be denoted as follows.
\begin{equation}\label{getm00}
\nabla\nabla F=\varphi g+c\nabla F\nabla F,
\end{equation}
where $c=-\frac{\mu}{\beta}$, and we redefine $\frac{\varphi}{\beta}$ as $\varphi$.
Ricci identity shows 
\begin{equation}\label{1}
\Delta \nabla _iF=\nabla _i\Delta F+R_{ij}\nabla _jF.
\end{equation}
By the equation of Einstein-type manifolds, one has
\begin{align}\label{2}
\Delta\nabla _i F
=&\nabla_k\nabla_k\nabla_iF\\
=&\nabla_k(\varphi g_{ki}+c\nabla_kF\nabla_iF)\notag\\
=&\nabla _i\varphi+c\Delta F\nabla_iF+c\nabla_kF\nabla_k\nabla_iF\notag\\
=&\nabla _i\varphi+c\Delta F\nabla_iF+c\varphi\nabla_iF+c^2|\nabla F|^2\nabla _iF,\notag
\end{align}
and
\begin{align}\label{3}
\nabla_i\Delta F
=&\nabla_i (n\varphi+c|\nabla F|^2)\\
=& n\nabla _i\varphi+2c\nabla _i\nabla_kF\nabla_kF\notag\\
=& n\nabla _i\varphi+2c\varphi\nabla _iF+2c^2|\nabla F|^2\nabla_iF.
\notag
\end{align}
Combining \eqref{1} with \eqref{2}, one has
\[
\nabla_i\varphi+c\Delta F\nabla_iF+c\varphi\nabla_iF+c^2|\nabla F|^2\nabla_i F
-\nabla _i\Delta F-R_{ij}\nabla_j F=0.
\]
Substituting \eqref{3} into the equation, we have
\[
(n-1)\nabla_i \varphi-(n-1)c\varphi \nabla _iF+R_{ij}\nabla _jF=0.
\]
By differentiating \eqref{key1}, we have
\begin{align*}
&(n-1)\nabla_\ell\nabla_i \varphi
-(n-1)c\nabla_\ell\varphi \nabla _iF\\
-&(n-1)c\varphi \nabla_\ell\nabla _iF
+\nabla_\ell R_{ij}\nabla _jF
R_{ij}\nabla_\ell F\nabla_j F=0.
\end{align*}
Combining this with \eqref{getm00}, we have
\begin{align*}
&(n-1)\nabla_\ell\nabla_i \varphi 
-(n-1)c\nabla_\ell\varphi\nabla _iF\\
-&(n-1)c\varphi^2 g_{\ell i}
-(n-1)c^2\varphi \nabla _{\ell} F\nabla _iF\notag\\
+&\nabla_\ell R_{ij}\nabla _jF
+\varphi R_{i\ell}
+cR_{ij}\nabla_\ell\nabla_jF=0.\notag
\end{align*}
By taking trace,
\begin{align*}
&(n-1)\Delta \varphi 
-(n-1)c\langle \nabla\varphi, \nabla F\rangle \\
-&(n-1)nc\varphi^2
-(n-1)c^2\varphi |\nabla F|^2\notag\\
+&\frac{1}{2}\langle \nabla R, \nabla F\rangle 
+\varphi R
+c\Ric(\nabla F,\nabla F)=0.
\end{align*}

\end{proof}

\begin{proof}(Proof of Theorem \ref{main})

By Theorem 1.4 in \cite{CMMR17} (see also \cite{CSZ12} and the proof of Theorem 5.7.4 in \cite{PetersenRG}), $(M,g)$ is a complete warped product metric, and must have one of the three forms:

(1) $M=\mathbb{R}\times N$ and $g=dr^2+\psi^2(r)g_N$, and $F$ has no critical point.

(2) $M=[0,+\infty)\times \mathbb{S}^{2m-1}$ and $g=dr^2+\psi^2(r)g_S,$ $r\in[0,+\infty)$, and $F$ has only one critical point $F'(0)=0$.

(3) $M$ is compact and rotationally symmetric,\\
where $\psi(r)$ is a positive smooth function on $M$ depends only on $r$, and $g_S$ denotes the round metric of $\mathbb{S}^{2m-1}$.

\noindent
Case 1 $(\beta,\mu)=(0,P)$:

In this case, $\eqref{getm0}$ is
\[
\nabla F \nabla F=\varphi g,
\]
where we redefine $\frac{\varphi}{\mu}$ as $\varphi$.
By taking trace, one has
$|\nabla F|^2=2m\varphi.$
Since $F$ depend only on $r$, one has
\[
F'^2=2m\varphi.
\]
By the soliton equation, 
$F'^2=\varphi.$
Therefore, one has 
\[
(2m-1)F'^2=0.
\]
If $m>1$, then $F$ is constant.

We will consider the other cases. In these cases, $\beta\not=0$.
By the proof of Lemma \ref{keylem}, the equation of Einstein-type manifolds is denoted as \eqref{getm00}:
\[
\nabla\nabla F=\varphi g+c\nabla F\nabla F,
\]
where $c=-\frac{\mu}{\beta}$, and we redefine $\frac{\varphi}{\beta}$ as $\varphi$.
In this case, an elementary argument shows that 
\[
\psi(r)=F'(r)e^{-cF(r)}, 
\]
and $F'$ and $\psi$ are positive (see \cite{CMMR17}, \cite{CSZ12}, \cite{Maeta21}).
By the equation, for smooth vector fields $X,Y$ on $M$,
\[
g(\nabla _X\nabla F,Y)=\nabla\nabla F(X,Y)=g(\varphi X+cg(X,\nabla F)\nabla F,Y).
\]
Thus, one has
\[
\nabla_X\nabla F=\varphi X+cg(X,\nabla F)\nabla F.
\]
For any vector fields X,Y on $M$, a long direct computation shows that 
\begin{align*}
Rm(X,Y)\nabla F
=&\nabla _X\nabla_Y\nabla F-\nabla_Y\nabla_X\nabla F-\nabla_{[X,Y]}\nabla F\\
=&(X\varphi -c\varphi XF)Y-(Y\varphi -c\varphi YF)X.
\end{align*}
Since $J\nabla =\nabla J$, one has
\[
Rm(X,Y)J\nabla F
=J(Rm(X,Y)\nabla F)
=(X\varphi -c\varphi XF)JY-(Y\varphi -c\varphi YF)JX.
\]
Taking trace, we have 
\[
\Ric (Y,J\nabla F)=g(Y,-J(\nabla \varphi-c\varphi\nabla F)).
\]
Take $Y=JX$. Since the Ricci tensor is invariant under $J$,
\begin{align*}
\Ric(X,\nabla F)=\Ric (JX,J\nabla F)
=&g(JX,-J(\nabla\varphi-c\varphi\nabla F))\\
=&g(X,-(\nabla\varphi-c\varphi\nabla F)).
\end{align*}
By \eqref{key1}, we also have
\begin{align*}
\Ric(X,\nabla F)=g(X,-(2m-1)(\nabla \varphi-c\varphi \nabla F)).
\end{align*}
Combining these equations, one has,
\[
2(m-1)(\nabla \varphi-c\varphi\nabla F)=0.
\]
Since $m>1$, we have
\begin{equation}\label{key4}
\nabla \varphi=c\varphi\nabla F.
\end{equation}

Since $M$ is K\"ahler, if $M$ is compact, then it is trivial. 
Therefore, we only need to consider the other cases.
 
Since $F$ depends only on $r$, the equation of Einstein-type manifolds is as follows.
\[
\varphi=F''-cF'^2.
\]
By \eqref{key4}, we have the following ODE.
\begin{equation}\label{keyeq}
F'''-3cF'F''+c^2F'^3=0.
\end{equation}
Note that $F'$ does not take 0 on $\mathbb{R}$ in (1) and on $(0,+\infty)$ in (2).

\noindent
Case 2 $(\beta,\mu)=(P,0)$: 

Since $c=0$, $F'''=0.$ 
Therefore, $F'(r)=ar+b$ for some constants $a$ and $b$.

\noindent
(a) We consider case (1). If $a\not=0$, this case cannot occur. If $a=0$, then $F'$ is positive constant, and $F(r)=br+d$ for some constant $d$. 
Note that $\psi(r)=F'(r)$ (cf. \cite{CMM12}, \cite{Tashiro65}, \cite{Maeta21}).

\noindent
(b) We consider case (2). Since $F'(0)=0$, one has
$F'(r)=ar$, and $(M,g)$ is the complex Euclidean space $\mathbb{C}^n$.

We will consider the other cases.
Since $\psi=F'e^{-cF}$,
we have
\[
\psi'(r)=(F''-cF'^2)e^{-cF},
\]
and 
\[
\psi''(r) =(F'''-3cF'F''+c^2F'^3)e^{-cF}.
\]
Combining this with \eqref{keyeq}, we have $\psi''(r)=0$.
Thus, one has $\psi(r)=ar+b$.


Assume that $a=0$. Then, $F'e^{-cF}=\psi=b>0$.
Hence, $F'=be^{cF}$, and we have
\[
F(r)=-\frac{1}{c}\log 
\left(
c(-br-c_1)
\right),
\]
where $c_1$ is a constant.

\noindent
Case (1). Since $F'(r)=\frac{b}{c(-br-c_1)}$ must be smooth on $\mathbb{R}$, we have a contradiction.

\noindent
Case (2). Since $F'(0)=0$, we have a contradiction.

Assume that $a\not=0$. 
Then, $F'(r)e^{-cF(r)}=\psi(r)=ar+b$ for some constants $a$ and $b$.

\noindent
Case (1). Since $\psi>0$ on $\mathbb{R}$, we have a contradiction.

\noindent 
Case (2). Since $F'(0)=0$, we have $0=\psi(0)=b$. Hence, $\psi(r)=ar,~(a>0)$.
Thus, one has 
\[
F'(r)=are^{cF(r)}.
\]
Therefore, we have
\[
F(r)=-\frac{1}{c}\log
\left\{
c
\left(
-\frac{a}{2}r^2-c_1
\right)
\right\},
\]
where $c_1$ is a constant. Differentiating $F$, we have 
\[
F'(r)=\frac{ar}{c
\left(
-\frac{a}{2}r^2-c_1
\right).
}
\]
Since $F'$ is a positive smooth function on $(0,+\infty)$, it follows that $c<0$ and $c_1>0$ must hold.

\end{proof}

In Case 1 of the proof of Theorem \ref{main}, we did not use the assumption that $M$ is K\"ahler. Therefore, we have the following.

\begin{corollary}
Any $n$-dimensional $(n\geq3)$ non-trivial, complete, gradient Einstein-type manifold with $\alpha =\beta= 0$ is trivial. 
\end{corollary}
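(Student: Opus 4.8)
The plan is to prove the Corollary by observing that when $\alpha = \beta = 0$, the defining equation \eqref{getm0} collapses to the purely algebraic condition $\mu \nabla F \nabla F = \varphi g$ on the potential function, with no Hessian term present. This is precisely the situation analyzed in Case 1 of the proof of Theorem \ref{main}, where the K\"ahler hypothesis was never invoked. The key point is that the argument in Case 1 is dimensional in nature and purely tensorial, so it transfers verbatim to the general Riemannian setting.

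The first step is to take the trace of the equation $\nabla F \nabla F = \varphi g$ (after redefining $\frac{\varphi}{\mu}$ as $\varphi$), which yields $|\nabla F|^2 = n\varphi$ in dimension $n$. The second step is to contract the same equation with $\nabla F$ against itself, or equivalently to read off the diagonal component: evaluating the rank-one tensor $\nabla F \nabla F$ directly gives $|\nabla F|^4 = \varphi |\nabla F|^2$, hence $|\nabla F|^2 = \varphi$ wherever $\nabla F \neq 0$. Comparing these two relations forces $(n-1)\varphi = 0$, so $\varphi \equiv 0$ and therefore $|\nabla F|^2 = 0$ whenever $n \geq 2$. In particular, for $n \geq 3$ this gives $\nabla F = 0$, so $F$ is constant and the manifold is trivial.

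I do not expect any genuine obstacle here, since the result is an immediate consequence of the computation already carried out in Case 1 together with the explicit remark preceding the Corollary that the K\"ahler assumption was unused. The only point requiring mild care is bookkeeping with the constant $\mu$: since $(\beta,\mu) = (0,\mu)$ with $\mu \neq 0$ (as $(\alpha,\beta,\mu) \neq (0,0,0)$), dividing through by $\mu$ is legitimate and the rescaled $\varphi$ remains a smooth function, so the trivialization conclusion is unaffected. One should also note that the hypothesis $n \geq 3$ is what guarantees $(n-1) \neq 0$ strictly away from the borderline $n=2$ case, matching the stated dimensional restriction. Thus the proof reduces to citing Case 1 and recording that it made no use of the complex structure, completing the argument in a few lines.
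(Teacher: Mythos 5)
Your proof is correct, and the computation at its core --- comparing the trace $|\nabla F|^2 = n\varphi$ with the ``diagonal'' relation $|\nabla F|^2 = \varphi$ to force $(n-1)\varphi = 0$ --- is exactly the computation in the paper's Case 1. The one genuine difference is how that second relation is obtained: the paper's Case 1 first invokes the warped-product trichotomy (so that $F = F(r)$ and the soliton equation evaluated in the $r$-direction gives $F'^2 = \varphi$), whereas you contract $\nabla_i F\, \nabla_j F = \varphi g_{ij}$ with $\nabla F \otimes \nabla F$ to get $|\nabla F|^4 = \varphi|\nabla F|^2$ pointwise. Your route is actually preferable here: when $\beta = 0$ there is no Hessian equation, so the structure theorem of Catino--Mastrolia--Monticelli--Rigoli (which drives the warped-product reduction) is not really the natural tool, and your argument needs neither completeness nor any global structure --- it is a purely algebraic rank observation ($\nabla F \otimes \nabla F$ has rank at most one, while $\varphi g$ has rank $n$ unless $\varphi = 0$). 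Two small points of bookkeeping: the deduction $(n-1)\varphi = 0$ is only valid at points where $\nabla F \neq 0$, so the clean statement is that any such point yields a contradiction, whence $\nabla F \equiv 0$; and your remark about $n \geq 3$ being needed for $(n-1) \neq 0$ is off --- the argument works verbatim for all $n \geq 2$, so the corollary's hypothesis $n \geq 3$ is simply weaker than what your proof (and the paper's) actually delivers.
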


As exemplified by Perelman's conjecture \cite{Perelman02}, rotational symmetry often becomes a central problem for geometric solitons, particularly under certain curvature assumptions.
By applying Theorem \ref{main}, we obtain rotational symmetry for a wide class of geometric solitons. 

\begin{corollary}
Let $(M,g)$ be a $2m$-dimensional $(m>1)$ complete K\"ahler manifold that satisfies one of the following. If the potential function is non constant (that is, $M$ is nontrivial), then it is rotationally symmetric.
\begin{enumerate}
\item
A gradient Yamabe soliton with $R\not=-\lambda$ at some point.
\item
A gradient almost Yamabe soliton with $R\not=-\lambda$ at some point.
\item
A gradient $k$-Yamabe soliton with $\sigma_k\not=\nu$ at some point.
\item
A gradient conformal soliton with $\lambda\not=0$ at some point.
\item
A gradient quasi-Yamabe soliton.
\end{enumerate}
\end{corollary}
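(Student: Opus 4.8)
The plan is to recognize each of the five soliton classes as a gradient Einstein-type K\"ahler manifold with $\alpha = 0$ and to invoke the classification in Theorem \ref{main}. First I would read off, from the list of correspondences in the introduction, the constants $(\beta,\mu,\rho)$ and the associated function $\varphi = \rho R + \lambda$ for each class, bearing in mind the normalization $\varphi \mapsto \varphi/\beta$ introduced in Lemma \ref{keylem}. In every one of the five classes $\beta > 0$, so this rescaling preserves both sign and vanishing of $\varphi$.

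For classes (1)--(4) I would observe that $\mu = 0$, hence $c = -\mu/\beta = 0$, so Theorem \ref{main}(II) applies and the manifold is either the product (A) $\mathbb{R}\times N^{2m-1}$ or (B) complex Euclidean space. Only (B) is rotationally symmetric, so the crux is to exclude (A). This is exactly where the extra hypotheses enter: with $c=0$ the soliton equation \eqref{getm00} reduces to $\nabla\nabla F = \varphi g$, and in case (A) the potential $F(r)=ar+b$ has $F'\equiv a$ and warping factor $\psi = F' = a$ constant, forcing $\nabla\nabla F \equiv 0$ and therefore $\varphi \equiv 0$. I would then translate each stated condition into $\varphi \not\equiv 0$: for (1) and (2) (where $\rho = 1$) it reads $R \neq -\lambda$ somewhere; for (3) (where $\rho = 0$ and $\lambda = \sigma_k - \nu$) it reads $\sigma_k \neq \nu$ somewhere; for (4) (where $\rho = 0$) it reads $\lambda \neq 0$ somewhere. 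In each case $\varphi \not\equiv 0$ contradicts (A), so the non-trivial manifold must be (B), which is rotationally symmetric.

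For class (5), the quasi-Yamabe soliton, I would note $\beta = 1 > 0$ and $\mu = -1/k \neq 0$ and split on the sign of $k$. If $k > 0$ then $\beta$ and $\mu$ have opposite signs, so Theorem \ref{main}(III) yields no non-trivial example; non-triviality therefore forces $k < 0$, in which case $\beta$ and $\mu$ have the same sign and Theorem \ref{main}(IV) produces the warped product $[0,+\infty)\times\mathbb{S}^{2m-1}$, which is rotationally symmetric. No extra hypothesis is needed here, precisely because the degenerate product (A) cannot occur when $\mu \neq 0$.

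The main obstacle --- indeed essentially the only nontrivial point --- is the bookkeeping for classes (1)--(4): one must verify carefully that case (II)(A) necessarily has $\varphi \equiv 0$, and that each listed curvature condition is exactly the assertion $\varphi \not\equiv 0$ after the normalization $\varphi \mapsto \varphi/\beta$. Once this identification is secured, the corollary follows immediately from Theorem \ref{main}, since all the remaining non-trivial possibilities, namely (II)(B) and (IV), are rotationally symmetric by construction.
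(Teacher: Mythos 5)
Your proposal is correct and is essentially the paper's own (implicit) argument: the paper derives this corollary simply by applying Theorem \ref{main}, and your fleshing-out --- identifying each soliton class with its constants $(\beta,\mu,\rho)$, noting that case (II)(A) forces $\varphi\equiv 0$ so the hypotheses in (1)--(4) exclude it, and using (III)/(IV) to handle the quasi-Yamabe case without any curvature assumption --- is precisely the intended reasoning. No gaps; the bookkeeping (sign of $\beta$ preserving the vanishing of $\varphi$ under the normalization, and $\mu=-1/k$ determining which of (III)/(IV) applies) is all accurate.
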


In particular, any nontrivial complete gradient quasi-Yamabe soliton on K\"ahler manifolds is rotationally symmetric, without any curvature assumption.


\bibliographystyle{amsbook}

\end{document}